%
%
%


\documentclass{amsproc}





\usepackage[backref]{hyperref}
\usepackage[alphabetic,backrefs,lite]{amsrefs}
\usepackage[usenames,dvipsnames]{color}


\newtheorem{tm}{Theorem}[section]
\newtheorem{pr}[tm]{Proposition}

\newtheorem{lm}[tm]{Lemma}

\newtheorem{rmk}[tm]{Remark}


\newcommand{\Aone}{{\mathbb{A}^{\!1}}}

\newcommand{\GW}{\mathrm{GW}}

\newcommand{\proj}{ \operatorname{Proj}}





\newcommand{\Hom}{\operatorname{Hom}} 

\newcommand{\Pic}{\operatorname{Pic}}




\newcommand{\ind}{\operatorname{ind}}




\newcommand{\hidden}[1]{\footnote{Hidden:  #1}}
\renewcommand{\hidden}[1]{}

\newcommand{\bbA}{\mathbf{A}}

\newcommand{\bbF}{\mathbf{F}}

\newcommand{\bbP}{\mathbf{P}}

\newcommand{\bbR}{\mathbf{R}}


\newcommand{\calO}{{ \mathcal O}}

\newcommand{\Spec}{\operatorname{Spec}}

\newcommand{\ShHom}{\mathscr{H}\kern -.5pt om}

\numberwithin{equation}{section}

\begin{document}

\title[Enriched wild ramification]{Examples of wild ramification in an enriched Riemann--Hurwitz formula}


\author{Candace Bethea}

\author{Jesse Leo Kass}

\author{Kirsten Wickelgren}

\subjclass[2010]{Primary 14F42}

\date{}

\begin{abstract}
M. Levine proved an enrichment of the classical Riemann--Hurwitz formula to an equality in the Grothendieck--Witt group of quadratic forms. In its strongest form, Levine's theorem includes a technical hypothesis on ramification relevant in positive characteristic. We describe  what happens when the hypotheses are weakened by  showing an analogous Riemann--Hurwitz formula and describing an example suggested by S. Saito. 
\end{abstract}

\maketitle

\section{Introduction}

In the recent preprint \cite{Levine-EC}, Marc Levine established an enriched version of the Riemann--Hurwitz formula that is valued in the Grothendieck--Witt group of nondegenerate symmetric bilinear forms.  In positive characteristic, especially over an imperfect field, the strongest form of Levine's theorem includes technical hypotheses on the ramification. At the workshop {\em Motivic homotopy theory and refined enumerative geometry}, Shuji Saito asked whether a Riemann--Hurwitz formula should hold more generally.  As an illustration of the situation he was interested in, he gave the example of the map $\bbP^{1}_{\bbF_{p}(t)} \to \bbP^{1}_{\bbF_{p}(t)}$ defined by $y\mapsto\frac{t-y^p}{y}$.  In this article, we prove a theorem which strengthens Levine's result by establishing his result under weaker hypotheses so that it applies to Saito's function.  We also illustrate the content of the theorem by computing the terms for Saito's example as well as for  the Artin--Schreier cover $\bbP^{1}_{\bbF_{p}(t)} \to \bbP^{1}_{\bbF_{p}(t)}$ defined by $y\mapsto y^{p}-y$.

The problem of establishing an enriched Riemann--Hurwitz formula with weaker hypotheses in characteristic $p$ is interesting because  the classical Riemann--Hurwitz formula becomes more complicated when passing from characteristic $0$ to characteristic $p>0$.   Over an algebraically closed field of characteristic $0$, the formula, as described in Corollary~2.4 of \cite[Chapter 4]{hartshorne}, states that a nonconstant map $f \colon Y \to X$ of curves satisfies
\begin{equation} \label{eq:UnenrichedRHCharZero}
	 \chi(Y) = d\cdot\chi(X)- \sum (e(y)-1).
\end{equation}
Here $d$ is the degree of $f$, $\chi(C)=2-2 g(C)$ is the topological Euler characteristic of a curve $C$, and $e(y)$ is the ramification index of $f$.  Recall the ramification index  is the normalized valuation $\nu_{y}(f^{*}(t))$ of the pullback of a uniformizer $t \in \calO_{X, f(y)}$.

When $k$ has positive characteristic (but is still assumed to be algebraically closed), Formula~\eqref{eq:UnenrichedRHCharZero} becomes more complicated in two ways.  First, we need to additionally require that $f$ is separable (to avoid maps like $y \mapsto y^p$ which is ramified everywhere).  Second, Equation~\eqref{eq:UnenrichedRHCharZero} holds as stated when $f$ is separable and the ramification indices are all coprime to $p$, i.e.~when $f$ is tamely ramified, but in general, the term $e(y)-1$ must be modified.  Define the branch index $b(y)$ to be the length of the module of relative K\"{a}hler differentials, i.e.~$b(y) := \operatorname{length}(\Omega_{Y/X, y})$. We then have 
\begin{equation} \label{Eqn: UnenrichedRHAllChar}
	 \chi(Y) = d\cdot\chi(X)- \sum b(y),
\end{equation}
and $b(y) \ge e(y)-1$ with equality holding if and only if $e(y)$ is coprime to $p$.  The branch index can alternatively be described in terms of the uniformizers.  If $t \in \calO_{X, f(y)}$ and $u \in \calO_{Y, y}$ are uniformizers, then the branch index equals the valuation $v_{y}(dt/du)$ for $dt/du$ the unique function satisfying $f^{*}( dt ) = dt/du \cdot du$.  

Formula~\eqref{Eqn: UnenrichedRHAllChar} remains valid when $k$ is nonalgebraically closed  provided the branch index is defined by $b(y) := \operatorname{length}(\Omega_{Y/X, y})$ \cite[Theorem~4.16, Remark~4.17]{qing}.  The branch index does not, however, always equal $v_{y}(dt/du)$.  Indeed, for Saito's example $y \mapsto \frac{t-y^p}{y}$, we have $dt/du=0$.  In this example, the residual extension $k(y)/k(x)$ is inseparable.  When the residual extension is separable, an explicit expression for $b(y)$ is given by  \cite[Chapter~3, Propositions~13 and 14; Chapter~4, Proposition~4]{serre79} (where $b(y)$ appears as the valuation of the different).

Over the real numbers $k=\bbR$, the Riemann--Hurwitz formula admits a real-topological analogue.  One implication of the hypotheses is that the manifold of real points $Y(\bbR)$ is orientable. Once we fix an orientation, the topological degree $\deg^{\bbR}(f)$ of the map $f \colon Y(\bbR) \to X(\bbR)$ on real points is well-defined and satisfies an analogue of the Riemann--Hurwitz formula, as was observed by Levine in  \cite[Example~12.9]{Levine-EC}.  Specifically, if $y \in Y$ is a point with residue field $\bbR$ and $t \in \calO_{X, f(y)}$, $u \in \calO_{Y, y}$ are uniformizers with $t$ compatible with the orientation (so the function germ $t \colon X(\bbR) \to \bbR$ is orientation-preserving at $f(y)$), then define the real branch index $b^{\bbR}(y)$ to be the local degree of $dt/du$ at $y$, so 
\[
	b^{\bbR}(y) = \begin{cases}
					+1	&	\text{ if $dt/du \circ u^{-1}$ is increasing at $0$;} \\
					-1	&	\text{ if $dt/du \circ u^{-1}$ is decreasing at $0$;} \\
					0	&	\text{ otherwise.}
				\end{cases}
\]
With this definition, we have
\begin{equation} \label{Eqn: RealRH}
	 \chi^{\bbR}(X) = d\cdot\chi^{\bbR}(Y)- \sum b^{\bbR}(y).
\end{equation}
Here $\chi^{\bbR}(X)$ denotes the Euler characteristic of $X(\bbR)$ of the real locus.  These Euler characteristics vanish, so the formula simplifies to 
\[
  	0 =  \sum b^{\bbR}(y).
\]
	
This equation admits a particularly simple interpretation when $X=Y=\bbP^{1}_{\bbR}$ and $f$ is defined by a monic polynomial $f = x^d + a_1 x^{d-1}+a_2 x^{d-2}+\dots+a_{d}$.  Considering $f$ as a continuous function $f \colon \bbR \to \bbR$, a computation of $b(\infty)$ shows that Equation~\eqref{Eqn: RealRH} takes the form
\[
	\#\text{local maxima of $f$}-\#\text{local minima of $f$}  = \begin{cases}
																					0	&	\text{ if $d$ is odd;} \\
																					-1	&	\text{ if $d$ is even.}
																				\end{cases}
\]

This result is the real realization of Levine's enriched Riemann--Hurwitz formula.  Over an arbitrary field, we replace the choice of an orientation of $X(\bbR)$ with the choices of a line bundle $M$ and an isomorphism $\alpha \colon M^{\otimes 2} \cong \operatorname{T}(X)$  of the square of $M$ with the tangent bundle.  Observe that, over $\bbR$, the pair $(M, \alpha)$ determines an orientation of $X(\bbR)$, but not every real curve admits a pair $(M, \alpha)$.  (Consider, for example, the Brauer--Severi curve $\{ X^2+Y^2+Z^2=0 \} \subset \bbP^{2}_{\bbR}$.)

Given $f\colon Y\to X$, Levine's strongest form of the enriched Riemann--Hurwitz formula holds under technical assumptions on the ramification that we recall below.  Let $t$ and $u$ be uniformizers as before, but now require that $t$ is compatible with $(M, \alpha)$ in the sense that, under the isomorphism on stalks $\alpha_{f(y)}^{\vee} \colon M^{- \otimes 2}_{f(y)} \cong \operatorname{T}^{\vee}_{x}(X)$, $dt$ corresponds to a tensor of the form $s \otimes s$ (rather than $s \otimes s'$ for $s \ne s'$).  

 Write the pullback $f^{*}(t)$ of the uniformizing parameter $t$ under the natural homomorphism $f^{*} \colon \calO_{X, f(x)} \to \calO_{Y, y}$ as  $f^{*}(t) = a\cdot u^{e(y)}$ with  $a\in \calO_{Y, y}^{*}$.  (Such an expression exists by the definition of $e(y)$.)  Define the motivic branch index by
\[
	b^{\bbA^{1}}(y) = \langle a(y) e(y) \rangle \cdot \sum_{i=0}^{e-2} \langle (-1)^{i} \rangle \text{ in  $\operatorname{GW}(k(y))$.}
\]
Here $\operatorname{GW}(k(y))$ denotes the Grothendieck--Witt group of nondegenerate symmetric bilinear forms, $\langle u \rangle$  denotes the class of the rank 1 bilinear form with Gram matrix $\begin{pmatrix} u \end{pmatrix}$, and $a(y)$ denotes the image of $a \in \calO_{Y, y}^*$ in the residue field.

With this notation, the hypothesis to \cite[Theorem~12.7]{Levine-EC} is that  $\operatorname{char} k \ne 2$,  $f$ is separable, and every ramification point has the property that $k(y)$ is a separable extension of $k$ and $e(y)$ is coprime to $p$.  When these hypotheses are satisfied, the theorem states
\begin{equation} \label{Eqn: LevineRiemannHurwitz}
		\chi^{\bbA^{1}}(Y)		=	d\cdot \chi^{\bbA^{1}}(X)  - \sum_{\{y\in Y\colon e(y)>1\}} \operatorname{Tr}_{k(y)/k}(b^{\bbA^{1}}(y))
\end{equation}
Here the sum runs over all ramification points of $f$.  The term $\chi^{\bbA^{1}}(C)$  is the Euler characteristic in $\bbA^1$-homotopy theory which equals $(1-g(C)) \cdot (\langle +1\rangle + \langle -1 \rangle)$  when $C$ is a smooth curve.  The expression $\operatorname{Tr}_{k(y)/k}(b^{\bbA^{1}}(y))$ is the class of the composition $\operatorname{Tr}_{k(y)/k} \circ \beta$ of the field trace map $\operatorname{Tr}_{k(y)/k} \colon k(y) \to k$ with a bilinear form $\beta$ representing $b^{\bbA^1}(y)$.

Equation~\eqref{Eqn: LevineRiemannHurwitz} is an enrichment of the earlier Riemann--Hurwitz formulas in the sense that those earlier statements can be deduced from it by comparing invariants. The fact that the left-hand  and right-hand sides of \eqref{Eqn: LevineRiemannHurwitz} have the same rank  is Equation~ \eqref{Eqn: UnenrichedRHAllChar}.  When $k=\bbR$, the fact that the signatures of the sides are equal is Equation~\eqref{Eqn: RealRH}.

Levine's hypotheses on the ramification points fail to hold  in Saito's example and in the example of the Artin--Schreier cover.  The Artin--Schreier cover fails to satisfy the hypotheses because $b(\infty)=p$.  In Saito's example, the ramification point $y$ defined by the ideal $(y^p-t)$ has the property that $e(y)=1$ but the residue extension $k(y)/k(f(y))$ is inseparable.  Levine deduces \cite[Theorem~12.7]{Levine-EC} from \cite[Corollary~10.9]{Levine-EC}, 
and that corollary applies when $f$ is wildly ramified, but it does not provide an explicit expression for $e^{\bbA^1}(y)$.  As Levine remarks immediately after the corollary, the main result of \cite{KWA1degree} can be used to derive an explicit expression for these branch indices.  In fact, we can reduce to the case where $\calO_{Y, y}$ is a monogenic extension of $\calO_{X, f(y)}$ by Proposition~\ref{NisnevichCoord} below, and in this case the branch indices can be computed using the earlier work of Cazanave \cite{cazanavea, cazanaveb} (loc.~cit.~only treat the global $\bbA^1$-degree, but see \cite{KWA2degree} for the relation with the local $\bbA^1$-degree).

In this paper, we explain in more detail how to use the results of \cite{KWA1degree} to establish an enriched Riemann--Hurwitz formula with explicitly computable branch indices when $f$ is allowed to have wild ramification. Rather than using the formalism developed in \cite{Levine-EC}, we establish an enriched Riemann--Hurwitz formula using the Euler class formalism in \cite{CubicSurface}.  Under suitable hypotheses, the local index $\ind_y df$ is defined as the local $\bbA^1$-degree with respect to a coordinate system.  This class is represented by an explicit bilinear form, and we recall a recipe for computing the form in Section~\ref{Notationsection}. 

The main result is

\begin{tm}\label{IntroRHthm}
Let $k$ be any field. Let $f: Y \to X$ be a non-constant, separable map of smooth, proper, geometrically connected curves over $k$. We make the following assumptions:
\begin{enumerate}
\item \label{assumeTXsquare} $X$ is oriented, meaning we have a line bundle $M$ and a chosen isomorphism  $M^{\otimes 2} \cong T^{\vee}X$.
\item \label{technical} $e(Y, \Hom(f^* T^{\vee}X ,T^{\vee}Y), df) = e(Y, \Hom(f^* T^{\vee}X ,T^{\vee}Y), \alpha df)$ for all $\alpha$ in $k^*$.
\end{enumerate}

Then there is an equality
\begin{equation} \label{Eqn: FinalRH}
		\chi^{\bbA^{1}}(Y)		=	d\cdot\chi^{\bbA^{1}}(X)  - \sum_{\{y: df(y) = 0\}} \ind_y df.
\end{equation}
\end{tm}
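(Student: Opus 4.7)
The plan is to interpret both sides of \eqref{Eqn: FinalRH} as Euler numbers in the sense of \cite{CubicSurface}, applied to the line bundle
\[
\calL := \Hom(f^* T^\vee X, T^\vee Y) \cong f^*TX \otimes T^\vee Y
\]
on $Y$. The differential $df$ is a canonical global section of $\calL$ whose zero locus is precisely the ramification locus $\{y : df(y) = 0\}$, which is finite since $f$ is separable and non-constant. Hypothesis~\eqref{assumeTXsquare} gives $f^* TX \cong (f^*M)^{\otimes -2}$, so
\[
\calL \otimes T^\vee Y \cong \bigl((f^*M)^{-1} \otimes T^\vee Y\bigr)^{\otimes 2}
\]
is a square, exhibiting a relative orientation on $\calL$ in the sense of \cite{CubicSurface}.

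The first step is to apply the local-to-global principle of \cite{CubicSurface}: for a relatively oriented line bundle on a smooth proper curve and a section with isolated zeros, the Euler number equals the sum of local $\bbA^1$-degrees. Hypothesis~\eqref{technical}, which asserts that rescaling $df$ by any $\alpha \in k^*$ preserves the Euler number, is precisely what guarantees that the local indices $\ind_y df$, defined via a coordinate system and a priori dependent on that choice, assemble into a well-defined global invariant. This yields
\[
e(Y, \calL, df) = \sum_{\{y : df(y) = 0\}} \ind_y df.
\]

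The second step is to compute $e(Y, \calL)$ intrinsically, exploiting the decomposition $\calL = f^*TX \otimes T^\vee Y$. Using additivity of the Euler number of line bundles under tensor product (i.e., additivity of the first Milnor--Witt Chern class), the identification $\chi^{\bbA^1}(C) = e(C, TC)$ for a smooth proper curve $C$, and a pullback/projection formula expressing $e(Y, f^*V)$ in terms of $e(X, V)$ and the degree $d$ of $f$, I expect to derive
\[
e(Y, \calL) = d \cdot \chi^{\bbA^{1}}(X) - \chi^{\bbA^{1}}(Y).
\]
Comparing the two expressions for $e(Y, \calL)$ then yields \eqref{Eqn: FinalRH}. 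The main obstacle is verifying the additivity, duality, and pullback identities for Euler numbers of line bundles in the generality needed here --- arbitrary base field $k$, allowing wild ramification and inseparable residue extensions at the ramification points --- precisely the regime avoided by the hypotheses of \cite{Levine-EC}. The formalism of \cite{CubicSurface} is designed to accommodate exactly this generality, which is why it gives a cleaner route to the theorem than the approach via \cite[Corollary~10.9]{Levine-EC}.
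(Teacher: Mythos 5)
Your setup is right — you identify the correct bundle $\calL = \Hom(f^*T^\vee X, T^\vee Y)$, the relative orientation from Assumption~\eqref{assumeTXsquare}, and that $e(Y,\calL,df)$ is by definition the sum of local indices. But there are two genuine problems.

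First, you misidentify the role of Assumption~\eqref{technical}. The local indices $\ind_y df$ are already well-defined once Nisnevich coordinates (Proposition~\ref{NisnevichCoord}) and a relative orientation are fixed, and $e(Y,\calL,df)$ is defined as their sum; no further hypothesis is needed to "assemble" them. What Assumption~\eqref{technical} actually does is force the Euler number to be \emph{hyperbolic}: since $\calL$ has odd rank, replacing $df$ by $\alpha df$ multiplies the Euler number by $\langle \alpha \rangle$ (this is the content of \cite[Proposition 12]{FourLines}), so the invariance $e(Y,\calL,df) = e(Y,\calL,\alpha df)$ for all $\alpha \in k^*$ pins down $e(Y,\calL,df)$ as a multiple of $h = \langle 1\rangle + \langle -1\rangle$. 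That is the key structural input.

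Second, your proposed computation of $e(Y,\calL)$ via "additivity of the first Milnor--Witt Chern class" and a "pullback/projection formula" is the step you flag as the main obstacle, and it is indeed where the argument breaks. In $\GW(k)$ the Euler number of a tensor product of line bundles is \emph{not} in general the sum of Euler numbers, and there is no clean projection formula of the kind you want at the level of quadratically enriched Euler classes of arbitrary line bundles; these are classes in $\GW(k)$, not integers, and the twisting in Chow--Witt first Chern classes obstructs naive additivity. The paper sidesteps this entirely: once the hyperbolicity $e(Y,\calL,df) = h \cdot (\deg\calL/2)$ is established, the remaining computation is purely the \emph{rank}, i.e.\ the classical degree $\deg\calL = \deg f\,(2-2g(X)) + (2g(Y)-2)$, which is elementary. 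So you should replace your second step by the hyperbolicity argument and the elementary degree computation; the additivity you hope for is neither available in this generality nor needed once you use the odd-rank trick.
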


This is proven in Section~\ref{Section:MT} below. The notation $e(Y, \Hom(f^* T^{\vee}X ,T^{\vee}Y), df)$ is defined in Section~\ref{Notationsection}. As we explain, when $f \colon Y \to X$ is described explicitly, the local indices of $df$ can be effectively computed using the main results of \cite{cazanavea, cazanaveb, KWA1degree}. 

\begin{rmk} We comment on the assumptions in Theorem~\ref{IntroRHthm}.
\begin{enumerate}
\item Assumption 1 can always be achieved after base change to a finite extension $M$ of $k$, because $T^{\vee} X$ has even degree and $\Pic^0(X)(\bar k)$ is divisible. 
\item There is work in progress to show that Assumption \eqref{technical} always holds \cite{Bachmann}, as well as to show that the local indices agree with those of \cite{Levine-EC}.
\end{enumerate}
\end{rmk}

We demonstrate the theorem in Section~\ref{Section:Example} by explicitly working out the terms in Equation~\eqref{Eqn: FinalRH} for the Artin--Schreier cover and Saito's function $f(y) = (t-y^{p})/y$.   These examples are especially interesting in the context of work of Kato and Abbes--Saito \cite{kato89, abbes02, abbes03, saito12} on the conductor of an extension of local fields with inseparable residue extension because classical work suggests that it would be interesting to explore the connection between that conductor and the term $b^{\bbA^{1}}(y)$.  Recall that, when the residue field is separable, the classical index $b(y)$  equals the valuation of the different which is related to the conductor by Artin's Conductor-Discriminant formula \cite[Chapter~VII,(11.9)]{neukirch}. Perhaps there is an enrichment of this formula that extends to the case where the residual extension is inseparable.

\section{Notation}\label{Notationsection}

For a field $k$, let $\GW(k)$ denote the Grothendieck--Witt group of $k$, which is the group completion of the semi-ring under $\oplus$ and $\otimes$ of isomorphism classes of $k$-valued nondegenerate symmetric bilinear forms on finite dimensional $k$-vector spaces. Since all such forms are stably diagonalizable, $\GW(k)$ is generated by $1$-dimensional forms $\langle a \rangle$ with $a$ in $k^*/(k^*)^2$, where $\langle a \rangle$ is the isomorphism class generated by the bilinear form $$ k \times k \to k, $$ $$(x,y) \mapsto a xy.$$ The class of the hyperbolic form is denoted $h$ and is given by $h = \langle 1 \rangle + \langle -1 \rangle.$

We recall some definitions from \cite{CubicSurface} that allow us to define an Euler number in $\GW(k)$. There are other definitions of such Euler numbers. Relevant references include \cite{Grig_Ivan}, \cite{BargeMorel},  \cite{FaselGroupesCW}, \cite[Chapter 8.2]{morel}, \cite{AsokFasel_comp_euler_classes},  \cite{DJK}, and \cite{LevineRaksit_MotivicGaussBonnet}. Please see, for example, the discussion in Section 1.1 of \cite{CubicSurface} entitled {\em Relation to other work}.

Let $Y$ be a smooth $k$-scheme of dimension $r$. Given a point $y$ of $Y$, {\em Nisnevich coordinates around $y$} are the data of a Zariski open neighborhood $U$ of $y$ and an \'etale map $\phi: U \to \mathbb{A}^r_k$ from $U$ to affine $r$-space such that the induced map $k(\phi(y)) \to k(y)$ on residue fields is an isomorphism. 

Let $\mathcal{V} \to Y$ be a vector bundle of rank $r$ over $Y$. A relative orientation of $\mathcal{V}$ is a line bundle $M$ and an isomorphism $\Hom(\det TY, \det \mathcal{V})\cong M^{\otimes 2}$. 
 
Let $\sigma$ be a section of $\mathcal{V}$. Given Nisnevich local coordinates around an isolated zero $y$ of $\sigma$, there is a local index (also called a {\em local degree}) $\ind_y \sigma$ in $\GW(k)$ of $\sigma$ at $y$ defined in \cite[Definition 28]{CubicSurface}.  Because our interest lies in being able to compute the local indices at points, especially points whose residue fields are inseparable extensions of $k$ and when the order of vanishing of $\sigma$ is divisible by the characteristic $k$, we recall the following computational recipe for $\ind_y \sigma$. One can also compute $\ind_y \sigma$ using the main result of \cite{cazanaveb}.

We choose a local trivialization of $\mathcal{V}$ near $y$ which is compatible with the relative orientation. Under this trivialization, $\sigma$ is identified with an element $f$ of $\oplus_{i=1}^r \mathcal{O}_Y$. Let $m_y$ denote the ideal corresponding to $y$. We can choose an element $g$ of $\oplus_{i = 1}^r m_y^N$ for $N$ sufficiently large (relative to the order of vanishing of $f$) so that the function $f+g$ is in the image of $\phi^*: \oplus_{i=1}^r \mathcal{O}_{\mathbb{A}^r_k, \phi(y)} \to \oplus_{i=1}^r \mathcal{O}_{Y,y}$. Choose $F$ in $\oplus_{i=1}^r \mathcal{O}_{\mathbb{A}^r_k,\phi(y)}$ such that $\phi^* (F) = f+g$. (Any choice of such a $g$ and $F$ will do.) Then $F$ determines a function $F: W \to \mathbb{A}^r_k $ from an open subset $W \subset \mathbb{A}^r_k = \Spec k[y_1,\ldots,y_r]$ to  $\mathbb{A}^r_k$. Let $F=(F_1,F_2,\ldots,F_r)$. 

Since $y$ is an isolated zero, $Q=k[y_1,\ldots,y_r]_{\phi(y)}/\langle F_1, \ldots, F_r \rangle$ is a finite dimensional $k$-vector space. Scheja--Storch \cite[Section~3]{scheja} construct the following bilinear form on $Q$, and the isomorphism class of this form is $\ind_y \sigma$. We can choose $a_{ij}$ in $k[y_1,\ldots,y_r] \otimes_k k[y_1,\ldots,y_r] $ such that $$F_j \otimes 1 - 1 \otimes F_j = \sum_i a_{ij} (y_i \otimes 1 - 1 \otimes y_i).$$ Let $\Delta$ denote the image of $\det (a_{ij})$ in $Q \otimes_k Q$. There is a canonical map $Q \otimes_k Q \to \Hom(\Hom(Q,k),Q)$ sending $b \otimes c$ to the linear map which sends $\mu$ to $\mu(b)c$. The image of $\Delta$ is an isomorphism $\Theta: \Hom(Q,k) \to Q$. Let $\eta = \Theta^{-1}(1)$. We obtain a bilinear form $Q \times Q \to k$ defined by $$(b,c) \mapsto \eta(bc).$$

It follows from the main theorem of \cite{KWA1degree} that $\ind_y \sigma$ agrees with the local $\mathbb{A}^1$-degree of the associated function  $F$ at $\phi(y)$, at least when $y$ is $k$-rational or $F$ has a simple zero. We denote this latter element of $\GW(k)$ by $\deg^{\mathbb{A}^1}_y \sigma$. 

Suppose now that additionally $Y$ is  proper, $\mathcal{V}$ is relatively oriented, and that $\sigma$ is a section with only isolated zeros such that there are Nisnevich coordinates around every zero of $\sigma$. Then define the Euler number $e(Y, \mathcal{V}, \sigma)$ of $\mathcal{V}$ with respect to $\sigma$  by $$e(Y, \mathcal{V}, \sigma) = \sum_{x: \sigma(x) = 0} \ind_x \sigma.$$  If $\sigma$ and $\sigma'$ are sections with only isolated zeros that can be connected by sections with only isolated zeros, potentially after base change by an odd degree field extension, then $e(Y, \mathcal{V}, \sigma) = e(Y, \mathcal{V}, \sigma')$ (\cite[Corollary 36]{CubicSurface}). 

\section{Examples}\label{Section:Example}
Here we look at the Riemann--Hurwitz formula in two specific cases that illustrate some of the more delicate behavior of covers of curves in characteristic $p$.  The first example is the example suggested by Shuji Saito.  This example is a rational map $f \colon \bbP^{1}_{k(t)} \to \bbP^{1}_{k(t)}$ with a ramification point $y$ such that $e(y)=1$ but the residual extension $k(y)/k(f(y))$ is inseparable. 

The second example is an Artin--Schreier cover $f \colon \bbP^{1}_{k} \to \bbP^{1}_{k}$.  This cover has the property that it admits a ramification point $y$ such that $k(y)=k(f(y))$ but the ramification index $e(y)$ equals the residual characteristic $p$.  


We work over a field $k$ of odd characteristic $p$.  For a field extension $L/k$, let  $ \bbP^1_{L, y}$ denote $1$-dimensional projective space over the field $L$ with projective variables $Y$ and $W$, i.e., $\bbP^1_{L,y} = \proj L[W,Y]$ with affine coordinate $y=Y/W$, and let $\bbP^1_{L ,x}=\proj L[X,Z]$.  

\begin{pr}\label{Example} The rational function  $f: \bbP^1_{k(t),y} \to \bbP^1_{k(t),x}$ defined by $y\mapsto\frac{t-y^p}{y}$ has the property that there is a relative orientation such that the branch indices are
\begin{align*}
	\ind_{(y^p-t)}df=& \left(\frac{p-1}{2}\right)\cdot h+\langle 1\rangle \\
	\ind_{\infty}df=&  \left(\frac{p-3}{2}\right)\cdot h+\langle -1\rangle.
\end{align*}

In particular, the enriched Riemann--Hurwitz formula \eqref{Eqn: FinalRH} holds.
 \end{pr}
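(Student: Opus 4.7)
The plan is to identify the zeros of $df$, fix compatible orientations on $Y$ (via $\mathcal{V}$) and on $X$, and apply the Scheja--Storch recipe of Section~\ref{Notationsection} to compute each local index; the enriched Riemann--Hurwitz formula~\eqref{Eqn: FinalRH} will follow by a direct comparison of forms.

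First, a calculation in the chart with coordinate $y$ gives $f'(y) = (y^p - t)/y^2$ in characteristic $p$ (using $1 - p \equiv 1$), so $df$ has a simple zero at the prime $(y^p - t)$. Near $\infty$, with $w = 1/y$ and $v = 1/x$, one finds $v \circ f = -w^{p-1}/(1 - tw^p)$ and $dv/dw = w^{p-2}/(1 - tw^p)$, so $df$ has a zero of order $p - 2$ at $\infty$. Since $\mathcal{V} = \Hom(f^* T^\vee X, T^\vee Y) \cong \mathcal{O}(2p-2)$ and $\det TY = \mathcal{O}(2)$, we have $\Hom(\det TY, \det \mathcal{V}) = \mathcal{O}(2p-4) = \mathcal{O}(p-2)^{\otimes 2}$, so $M = \mathcal{O}(p-2)$ provides a relative orientation of $\mathcal{V}$ matching the standard orientation of $X = \bbP^1$.

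At the prime $(y^p - t)$ we take $y$ as the Nisnevich coordinate and trivialize $\mathcal{V}$ via $f^*(dx) \mapsto dy$, so $df$ is identified with $F(y) = (y^p-t)/y^2$. The quotient $Q = k(t)[y]/(y^p - t) \cong k(t^{1/p})$ is a $k(t)$-vector space of dimension $p$ with basis $\{1, y, \ldots, y^{p-1}\}$. The main step is the Bezoutian computation: working in $Q \otimes_{k(t)} Q$ and using $y^p = t$ together with $1/y^2 = y^{p-2}/t$, one finds $\eta = \Theta^{-1}(1) = e_{p-3}^*$, the dual basis vector to $y^{p-3}$. The pairing $\beta(y^i, y^j) = \eta(y^{i+j})$ then has matrix with $1$'s on the anti-diagonal $i + j = p - 3$ and $t$'s on the anti-diagonal $i + j = 2p - 3$. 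The first block (indexed by $0, \ldots, p-3$) decomposes into $\tfrac{p-3}{2}$ hyperbolic planes plus a central singleton $\langle 1 \rangle$, while the $2 \times 2$ block $\bigl(\begin{smallmatrix} 0 & t \\ t & 0 \end{smallmatrix}\bigr)$ on $(y^{p-2}, y^{p-1})$ is isometric to $h$ (via $\langle 2t \rangle + \langle -2t \rangle = h$). Summing gives $\ind_{(y^p - t)} df = \tfrac{p-1}{2} h + \langle 1 \rangle$.

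At $\infty$ we use the Nisnevich coordinate $w$ and the quotient $Q' = k(t)[w]/(w^{p-2})$ of dimension $p - 2$. The key subtlety is that the orientation of $X = \bbP^1$ (coming from the isomorphism $\mathcal{O}(-1)^{\otimes 2} \cong T^\vee \bbP^1$) forces the compatible uniformizer at $\infty_X$ to be $-v$ rather than $v$, so in compatible trivializations $df$ is identified with $-w^{p-2}/(1-tw^p)$. The Scheja--Storch computation parallels the previous case on $Q'$, producing the anti-diagonal matrix with $-1$'s; its central singleton contributes $\langle -1 \rangle$ while the paired $2 \times 2$ sub-blocks remain hyperbolic, giving $\ind_{\infty} df = \tfrac{p-3}{2} h + \langle -1 \rangle$. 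The main obstacle throughout is careful tracking of these orientation-compatible trivializations, in particular this sign twist at $\infty$. Finally, summing yields $(p-2)h + \langle 1 \rangle + \langle -1 \rangle = (p-1)h$, which matches $d \cdot \chi^{\bbA^{1}}(X) - \chi^{\bbA^{1}}(Y) = ph - h$, verifying \eqref{Eqn: FinalRH}.
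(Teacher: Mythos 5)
Your proof is correct and reaches the same bilinear forms as the paper, but the computational route at the two zeros differs from the paper's in a way worth noting. At both points you apply the Scheja--Storch recipe directly to the local algebra ($Q = k(t)[y]/(y^p-t)$ of rank $p$, and $Q' = k(t)[w]/(w^{p-2})$ of rank $p-2$), whereas the paper invokes Cazanave's theorem to compute a \emph{global} B\'ezoutian of the relevant rational self-map of $\bbP^1$ and then extracts the local index: at $(y^p-t)$ by observing that it is the only zero, and at $\infty$ by computing a rank-$p$ global B\'ezoutian of $(1-p)v^{p-2}/(-v^pt-1)$ and subtracting the hyperbolic contribution $h = \deg_\infty$. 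Your direct approach is arguably tidier at $\infty$, since you work with the genuinely local quotient of rank $p-2$ and never pass through the global rank-$p$ form. I verified your claim that $\eta = e_{p-3}^*$ at $(y^p-t)$: reducing $a(x,y) = \bigl((y^p-t)/y^2 - (x^p-t)/x^2\bigr)/(y-x)$ in $Q \otimes Q$ using $y^{-2} = y^{p-2}/t$ indeed forces $\eta(y^i) = \delta_{i,p-3}$, and the resulting Gram matrix has $1$'s on $i+j=p-3$ and $t$'s on $i+j=2p-3$, which is visibly $\tfrac{p-1}{2}h + \langle 1 \rangle$. Your treatment of the sign at $\infty$ is also sound, though it is worth recording that the paper and you introduce the required minus sign at different places: the paper substitutes $v := -w$ on the source $Y$, while you negate the coordinate on the target $X$ (arguing that under $\mathcal{O}(-1)^{\otimes 2} \cong T^\vee\bbP^1$ the compatible uniformizer at $\infty_X$ is $-v$, since $dv = -x^{-2}dx$ has the wrong sign relative to the squared transition $x^2$). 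Both conventions scale the resulting trivialization of $\mathcal{V}$ by $-1$, hence they agree, and both produce the antidiagonal matrix of $-1$'s on $Q'$ whose central entry yields the $\langle -1 \rangle$.
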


\begin{proof} We will show there is an equality $$\sum_{\{y: df(y) = 0\}} \ind_y ^\Aone df = h\left(g(\bbP^1_{k(t),y} )-1 + \deg f(1-g
(\bbP^1_{k(t),x}))\right)$$ in GW$(k(t))$, where $\deg f$ refers to the degree of the extension of function fields. First observe that the right-hand side is 
\begin{align*}h\left(g(\bbP^1_{k(t),y} )-1 + \deg f(1-g
(\bbP^1_{k(t),x}))\right) &=h(0-1+p(1-0))\\ &=h(p-1),\end{align*} so we need to show that $\sum_{\{y\colon df(y)=0\}}\ind_y^\Aone df=h(p-1)$. Observe that $$\{y\colon df(y)=0\}=\{y\colon f'(y)=0  \text{ or } h'(w)=0\},$$ where $w=\frac{1}{y}$ and $h=\frac{1}{f}$. Here we are using the affine coordinates $y=Y/W$ on $\Spec k(t)[y]$ and $w=W/Y$ on $\Spec k(t)[w]$. On $\Spec k(t)[y,y^{-1}]$, $x=f(y)=\frac{t-y^p}{y}$ and $f'(y)=\frac{y^p-t}{y^2}=0$ only at the point defined by the ideal $(y^p-t)$. On $\Spec k(t)[w]$, $z=h(w)=\frac{w^{p-1}}{w^pt-1}$ and $h'(w)=\frac{(p-1)w^{p-2}}{w^pt-1}=0$ at $w=\frac{1}{y}=0$. We will compute $\ind_{(y^p-t)}^\Aone df$ and $\ind_{\infty}^\Aone df$ separately. 

In order for local degrees to be well-defined, we must first construct a relative orientation for the line bundle $\Hom(f^*T^{\vee}\bbP^1_{k(t),x},T^{\vee}\bbP^1_{k(t),y})\to \bbP^1_{k(t),y}$. We will use the definition of relative orientability given by Kass and Wickelgren in \cite{CubicSurface}, which is related to the analogous definition in \cite{okonek14}. Given a line bundle $E$ on a smooth curve $C$, a relative orientation of $E$ is the datum of a line bundle $M$ and an isomorphism $\Hom(TC, E)\cong M^{\otimes 2}$. Observe that 
\begin{align*}
\Hom(T\bbP^1_{k(t),y}, \Hom(f^*T^{\vee}\bbP^1_{k(t),x},T^{\vee}\bbP^1_{k(t),y}) ) &\cong T^{\vee}\bbP^1_{k(t),y}\otimes \Hom(f^*T^{\vee}\bbP^1_{k(t),x},T^{\vee}\bbP^1_{k(t),y}) \\
&\cong T^{\vee}\bbP^1_{k(t),y}\otimes(f^{*}T^{\vee}\bbP^1_{k(t),x})^{\vee}\otimes T^{\vee}\bbP^1_{k(t),y}\\
&\cong \calO(-2)\otimes\calO(2p)\otimes\calO(-2)\\
&\cong \calO(p-2)^{\otimes 2},\\
\end{align*}
so $\Hom(f^*T^{\vee}\bbP^1_{k(t),x},T^{\vee}\bbP^1_{k(t),y})$ is relatively orientable.

We will still make precise the explicit isomorphism $T\bbP^1_{k(t),y}\cong \calO(1)^{\otimes 2}$ in order to compute local degrees using sections. Write $v:=-w$ on $\Spec (k(t)[v])=\Spec (k(t)[-w])$ and give $\bbP^1_{k(t),y}$ the affine coordinate $y=\frac{1}{v}$ so that $dy=\frac{1}{v^2}dv$. Define $T\bbP^1_{k(t),y}\to \calO(1)^{\otimes 2}$ on affine patches by $v\mapsto \partial y$ on $\Spec k(t)[v]$ and $y\mapsto \partial v$ on $\Spec k(t)[y]$, where $\partial y$ and $\partial v$ are the respective duals of the sections $dy$ and $dv$ on $T^{\vee}\bbP^1_{k(t),y}$. Note then that $\partial y=v^2\partial v$, as desired.

Let $U=f^{-1}\Spec(k(t)[z])\cap \Spec(k(t)[v])$ for notational ease, and let $\psi$ be the trivialization of $\Hom(f^*T^{\vee}\bbP^1_{k(t),x}, T^{\vee}\bbP^1_{k(t),y})$ which associates $1\in \calO_{\bbP^1_{k(t),y}}$ to the function $\{dz\mapsto dv\}$. Since $z=h(w)=\frac{w^{p-1}}{w^pt-1}$, $dz=\frac{(1-p)v^{p-2}}{v^pt-1}dv$ and hence $df|_{U}=\frac{(1-p)v^{p-2}}{v^pt-1}\{dz\mapsto dv\}$ in $\Hom(f^*T^{\vee}\bbP^1_{k(t),x},T^{\vee}\bbP^1_{k(t),y})(U)$. Thus the global section $df$ of $\Hom(f^*T^{\vee}\bbP^1_{k(t),x},T^{\vee}\bbP^1_{k(t),y})$ corresponds to the function $v\mapsto \frac{(1-p)v^{p-2}}{-v^pt-1}$ in the coordinate $v$ on $U$. Therefore we need to compute $\ind_{(y^p-t)}^\Aone df=\deg_{(y^p-t)}\frac{y^p-t}{y^2}$ and  $\ind_{\infty}^\Aone df=\deg_0^\Aone \frac{(1-p)v^{p-2}}{-v^pt-1}$.

The local degrees $\deg_{(y^p-t)}\frac{y^p-t}{y^2}$ and $\deg_0^\Aone \frac{(1-p)v^{p-2}}{-v^pt-1}$ can be computed using Cazanave's result on the naive homotopy class of a rational function from $\bbP^1_L$ to itself for any field $L$ \cite{cazanavea}. Given a  rational function $\frac{f_1}{f_2}\colon \bbP^1_L\to \bbP^1_L$, we can write $\frac{f_1(x)f_2(y)-f_1(y)f_2(x)}{x-y}=:\sum_{1\leq i,j\leq n}c_{ij}x^{i-1}y^{j-1}$. The B\'{e}zoutian of $\frac{f_1}{f_2}$, denoted B\'{e}z($f_1, f_2$), is defined to be the bilinear form with Gram matrix $[c_{ij}]_{1\leq i,j\leq n}$. Cazanave's main result is that B\'{e}z($f_1, f_2$) is a representative of the isomorphism class of $\deg^{\Aone}\frac{f_1}{f_2}$ in GW(L).

Cazanave's result allows us to compute the global degree, $\deg^\Aone(\frac{y^p-t}{y^2})$, which is equal to $\sum_{\{q\colon q\mapsto 0\}}\deg_q^\Aone\frac{y^p-t}{y^2}$ by \cite{KWA1degree}. In this particular case, the only zero of $f$ is the closed point $\{(y^p-t)\}$, so a global degree computation using the B\'{e}zoutian also computes the local degree, $\deg_{(y^p-t)}^\Aone \frac{y^p-t}{y^2}$. If we write $f_1=y^p-t$ and $f_2=y^2$, then 
\begin{align*}
\frac{f_1(x)f_2(y)-f_1(y)f_2(x)}{x-y}&=\frac{x^2y^2(x^{p-2}-y^{p-2})+t(x^2-y^2)}{x-y}\\
&=x^2y^2(x^{p-3}+x^{p-4}y+\cdots +y^{p-3})+t(x+y)\\
&=t(x+y)+x^{p-1}y^2+x^{p-2}y^3+\cdots +x^2y^{p-1}.\\
\end{align*}
Thus the Gram matrix of the B\'{e}zoutian of $\frac{y^p-t}{y^2}$, and hence a Gram matrix of $\ind_{(y^p-t)}^\Aone df$, is 
\[
\text{B\'{e}z}(y^p-t,y^2)= 
\begin{bmatrix}
    0       & t & 0 & \dots & 0 \\
    t       & 0 & 0 & \dots &0 \\
    0 & 0 & 0 & \dots & 1\\
    \vdots & \vdots &\vdots & \cdots & \vdots \\
   0     & 0 & 1 & \dots & 0
\end{bmatrix}.
\]

A diagonalization of this matrix is the Gram matrix of the diagonal nondegenerate symmetric bilinear form $\left(\frac{p-1}{2}\right)\cdot h+\langle 1\rangle$. Therefore we can conclude that $\ind_{(y^p-t)}^\Aone df=\left(\frac{p-1}{2}\right)\cdot h+\langle 1\rangle$ in GW($k(t)$).

Now we will compute $\deg_0^\Aone \frac{(1-p)v^{p-2}}{-v^pt-1}=\ind_{\infty}^\Aone df$. The global degree of $\frac{(1-p)v^{p-2}}{-v^pt-1}$ will be equal to the local degree at 0 plus the local degree at $\infty$, as $\{0,\infty\}$ is the fiber over $0$.  If we write $f_1=(1-p)v^{p-2}$ and $f_2=-v^pt-1$, then 
\begin{align*}
\frac{f_1(x)f_2(y)-f_1(y)f_2(x)}{x-y}&=\frac{(p-1)tx^{p-2}y^{p-2}(y^2-x^2)+(p-1)(y^{p-2}-x^{p-2})}{x-y} \\
&=(p-1)[tx^{p-2}y^{p-2}(y+x)+(y^{p-3}+y^{p-4}x+\cdots+yx^{p-4}+x^{p-3})]\\
&=(p-1)[tx^{p-2}y^{p-1}+tx^{p-1}y^{p-2}+y^{p-3}+y^{p-4}x+\cdots+yx^{p-4}+x^{p-3}].
\end{align*}
Thus the Gram matrix of the B\'{e}zoutian of $\frac{(1-p)v^{p-2}}{-v^pt-1}$ is 
\[
 \begin{bmatrix}
    0  & \dots &0& (p-1) & 0 & 0 \\
    0 & \dots &(p-1)& 0 &0 & 0 \\
    0 & \dots &0 & 0 & 0 & 0\\
    \vdots &\ddots & 0& 0 & \vdots &\vdots \\
    (p-1) & \dots & 0 & 0 & 0 & 0\\
   0 & \dots &0 & 0& 0 & t(p-1)\\
    0 &\dots &0 & 0& t(p-1) & 0
\end{bmatrix}.
\] 

The diagonalization of this Gram matrix of the diagonal bilinear form $\left(\frac{p-1}{2}\right)\cdot h+\langle (p-1)\rangle=\left(\frac{p-3}{2}\right)\cdot h+\langle -1\rangle$. Since $\deg_{\infty}^\Aone \frac{(1-p)v^{p-2}}{-v^pt-1} = \langle 1 \rangle + \langle -1 \rangle$, it follows that $\ind_{\infty}df=\deg_0^\Aone \frac{(1-p)v^{p-2}}{-v^pt-1}=\left(\frac{p-3}{2}\right)\cdot h+\langle - 1\rangle$ in GW($k(t)$). 

We conclude that 
\begin{align*} 
\sum_{\{y\colon df(y)=0\}}\ind_y^\Aone df&= \ind_{(y^p-t)}^\Aone df+\ind_{\infty}^\Aone df\\
&=\left(\frac{p-1}{2}\right)h+\langle 1\rangle +h\left(\frac{p-3}{2}\right)+\langle -1\rangle \\
&=(p-2)h+\langle 1\rangle +\langle -1\rangle\\
&=(p-1)h,\\
\end{align*}

as desired. \end{proof}

\begin{pr}\label{ExampleTwo} The rational function  $f: \bbP^1_{k(t),y} \to \bbP^1_{k(t),x}$ defined by $y\mapsto y^{p}-y$ has the property that there is a relative orientation such that the branch indices are
\begin{align*}
	ind_{\infty}df=& \left(\frac{p-1}{2}\right)\cdot h.
\end{align*}

In particular, the enriched Riemann--Hurwitz formula \eqref{Eqn: FinalRH} holds.
\end{pr}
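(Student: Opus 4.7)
The plan is to mirror the proof of Proposition \ref{Example}: locate the zeros of $df$, equip the line bundle $\Hom(f^*T^{\vee}\bbP^1_{k(t),x},T^{\vee}\bbP^1_{k(t),y})$ with a relative orientation, express $df$ in a compatible trivialization, and compute $\ind_{\infty} df$ via the B\'ezoutian formula of Cazanave \cite{cazanavea}.

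First I would identify the ramification locus. In characteristic $p$ we have $f'(y) = py^{p-1}-1 = -1$, a unit on $\Spec k(t)[y]$, so $df$ can vanish only at $y = \infty$. Passing to the uniformizers $w = 1/y$ on the source and $z = 1/x$ on the target, a short computation gives $z = w^p/(1-w^{p-1})$ and
\[
\frac{dz}{dw} \;=\; \frac{pw^{p-1} - w^{2p-2}}{(1-w^{p-1})^2} \;=\; \frac{-w^{2p-2}}{(1-w^{p-1})^2} \qquad (\operatorname{char} k = p),
\]
which vanishes to order $2p-2$ at $w=0$.

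Since $\deg f = p$, the line-bundle computation in Proposition \ref{Example} carries over verbatim: $\Hom(T\bbP^1_{k(t),y},\Hom(f^*T^{\vee}\bbP^1_{k(t),x},T^{\vee}\bbP^1_{k(t),y})) \cong \calO(p-2)^{\otimes 2}$, so the Hom bundle is relatively orientable. Using the substitution $v = -w$ and the trivializing section $\{dz \mapsto dv\} \leftrightarrow 1$ from Proposition \ref{Example}, one identifies $df$ on a neighborhood of $\infty$ with a rational function in $v$ whose leading term at $v=0$ is proportional to $v^{2p-2}$.

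Finally I would compute $\ind_{\infty}df$ by writing this rational function as $f_{1}/f_{2}$ with $f_{2}(0)\neq 0$ and forming the B\'ezoutian as in Proposition \ref{Example}. The main obstacle is bookkeeping: the sign conventions and denominator unit factors from the $v=-w$ substitution and the characteristic-$p$ simplifications must be tracked carefully, and the denominator $(1-v^{p-1})^2$ contributes nontrivial entries off the antidiagonal of the Gram matrix. However, because the order of vanishing $2p-2$ is even, the local $\bbA^{1}$-degree depends only on that order: after reducing modulo a sufficiently high power of the maximal ideal, the Scheja--Storch form is that of a monomial $c\,v^{2p-2}$, whose Gram matrix is antidiagonal of size $2p-2$ with constant unit entries and therefore a sum of hyperbolic planes. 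This pins down $\ind_{\infty}df$ as a multiple of $h$, and verification of \eqref{Eqn: FinalRH} reduces to checking the identity $\chi^{\bbA^{1}}(\bbP^1_{k(t),y}) = p\cdot h - \ind_{\infty}df$ in $\GW(k(t))$.
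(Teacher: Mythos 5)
Your proposal follows the same route as the paper's proof: locate the unique zero of $df$ at $\infty$, compute $dz/dw = -w^{2p-2}/(1-w^{p-1})^2$, set up the relative orientation exactly as in Proposition~\ref{Example}, and invoke the B\'ezoutian/Scheja--Storch machinery. Two comments on the shortcut you take at the end. First, the phrase ``reducing modulo a sufficiently high power of the maximal ideal'' elides the actual mechanism: the recipe of Section~\ref{Notationsection} allows you to \emph{perturb} the section by an element of $m_y^N$ for $N$ large, and one valid choice of perturbation does replace $-v^{2p-2}/(1-v^{p-1})^2 = -v^{2p-2}(1 + 2v^{p-1} + \cdots)$ by the monomial $-v^{2p-2}$, since the discarded tail lies in $m^{3p-3}$ while the order of vanishing is only $2p-2$. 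So your instinct is right, but you should say this explicitly rather than appeal to a general principle that ``the degree depends only on the order of vanishing'' (which in odd order is false --- the leading coefficient enters). The paper instead computes the full $(2p-2)\times(2p-2)$ B\'ezoutian of $-v^{2p-2}/(1-v^{p-1})^2$, whose Gram matrix has $-1$'s on the main antidiagonal $i+j=2p-1$ and $2$'s on the antidiagonal $i+j=3p-2$ in the lower-right block, and then row-reduces the $2$'s away; your shortcut avoids the extra bookkeeping.

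Second, and more substantively: pushed to a conclusion, your argument yields an antidiagonal Gram matrix of size $2p-2$ with unit entries, i.e.\ the class $(p-1)\cdot h$, \emph{not} $\bigl(\tfrac{p-1}{2}\bigr)\cdot h$ as stated in Proposition~\ref{ExampleTwo}. The value $(p-1)\cdot h$ is the one forced by Theorem~\ref{IntroRHthm}: with $\chi^{\bbA^1}(\bbP^1_{k(t)}) = h$ and $d = \deg f = p$, Equation~\eqref{Eqn: FinalRH} reads $h = p\cdot h - \ind_\infty df$, so $\ind_\infty df = (p-1)h$. It also matches the rank check against the classical branch index $b(\infty)=2p-2$ for the Artin--Schreier cover, and against the size of the B\'ezoutian matrix in the paper's own proof. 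So do not try to reconcile your calculation with the displayed coefficient $\tfrac{p-1}{2}$; the proposition's stated coefficient appears to be off by a factor of two, and your computation (once you justify the monomial truncation carefully) gives the correct value.
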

\begin{proof}
Orient as in the previous proof.  For the affine coordinates $z$ and $v$ as in that proof, we have that $f$ is given by $v \mapsto v^{p}/(1-v^{p-1})$, so 
\[
	df|U = \frac{ -v^{2 p-2}}{(1-v^{p-1})^2} \cdot dv.
\]
We complete the proof by computing the local degree of $\frac{ -v^{2 p-2}}{(1-v^{p-1})^2}$ using \cite{cazanavea}.  An algebra computation shows that the 
\begin{multline*}
	\frac{(-x^{2 p-2}) (1-y^{p-1})^2-(-y^{2 p-2}) (1-x^{p-1})^2}{x-y} = -(x^{2 p-3} + x^{2 p-4} y + \dots + x y^{2 p-4} + y^{2 p-3}) \\+ 2 x^{p-1} y^{p-1} (x^{p-2}+x^{p-3} y + \dots + x y^{p-3} y^{p-2}).
\end{multline*}
We conclude that a Gram matrix for the local degree is
\[
\begin{bmatrix}
 0 & 0 & \dots & 0 & 0	& \dots & 0 & -1 \\
 0 & 0 & \dots & 0 & 0 	& \dots & -1 & 0 \\
 \vdots & \vdots & \ddots & \vdots & \vdots & \vdots & \vdots & \vdots \\
  0 & 0 & \dots & 0 & -1 	& \dots & 0 & 2 \\
    0 & 0 & \dots & -1 & 0 	& \dots & 2 & 0 \\
  \vdots & \vdots & \ddots & \vdots & \vdots & \vdots & \vdots & \vdots \\
 0 & -1 & \dots & 0 & 2 	& \dots & 0 & 0 \\
 -1 & 0 & \dots & 2 & 0 	& \dots & 0 & 0 \\
\end{bmatrix}.
\] 
This Gram matrix is equivalent to the Gram matrix where the only nonzero entries are $-1$'s along the antidiagonal, and this matrix represents $\left(\frac{p-1}{2}\right)\cdot h$.
\end{proof}

\section{Main Theorem}\label{Section:MT}

We prove the enriched Riemann--Hurwitz formula, Theorem~\ref{IntroRHthm}, over an arbitrary field $k$ discussed in the introduction. Marc Levine has previously shown an enriched Riemann--Hurwitz formula \cite[Theorem 12.7]{Levine-EC}. The hypotheses and context of Theorem~\ref{IntroRHthm} differ from M. Levine's result, and our particular interest in the present context comes from the possibility of explicitly computing certain local indices, even in the presence of wild ramification and inseparable residue field extensions. 

Let $f: Y \to X$ be a non-constant, separable map of smooth, proper, geometrically connected curves over $k$.

We will need Nisnevich coordinates around the closed points of $Y$ (see Section \ref{Notationsection} for the definition of Nisnevich coordinates). For $k$ infinite, the existence of such coordinates is shown by \cite[Chapter 8, Proposition 3.2.1]{knus}. When $k \subseteq k(y)$ is separable, this is proven in \cite[Lemma 18]{CubicSurface}. Combining these results gives the desired existence. We include a different proof here, directly extending \cite[Lemma 18]{CubicSurface} in the case where the dimension of $Y$ is $1$, because it offers another perspective. Namely, the proof of \cite[Lemma 18]{CubicSurface} holds under the weaker hypothesis that $k \subseteq k(y)$ is a simple extension of fields, meaning that $k(y)$ is obtained from $k$ by adjoining a single element. We will show that $k \subseteq k(y)$ is always simple, using a modification of David Speyer's proof of the Primitive Element Theorem \cite{SpeyerPrimElement}. 

\begin{lm}[Speyer, Lemma 2, loc. cit]\label{SpeyerLemma2}
Let $r(x)$ and $q(x)$ be polynomials with coefficients in a field with $q(0) \neq 0$. Then, for all but finitely many $t$, the polynomials $r(tx)$ and $q(x)$ have no common factor.
\end{lm}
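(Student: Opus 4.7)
The plan is to reduce the question to comparing roots in an algebraic closure $\bar{k}$. The key observation is that two nonzero polynomials in $\bar{k}[x]$ share a nonconstant common factor if and only if they share a common root in $\bar{k}$, so I want to show that for all but finitely many $t$, the polynomials $r(tx)$ and $q(x)$ have no common root.

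First I would dispose of trivialities. If $r$ is the zero polynomial the conclusion fails unless $q$ is a unit, so this case should be interpreted as vacuous or excluded, and I will assume $r \neq 0$. Then both $r$ and $q$ have finitely many roots in $\bar{k}$. Write the roots of $q$ as $\alpha_1, \dots, \alpha_n$ and the roots of $r$ as $\beta_1, \dots, \beta_m$; the hypothesis $q(0) \neq 0$ guarantees that each $\alpha_i$ is nonzero.

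Next, I would observe that $r(tx)$ and $q(x)$ have a common root in $\bar{k}$ exactly when there exist indices $i$ and $j$ with $q(\alpha_i) = 0$ and $r(t\alpha_i) = 0$, i.e.\ with $t\alpha_i = \beta_j$. Since each $\alpha_i$ is nonzero, this forces $t = \beta_j/\alpha_i$, so the set of ``bad'' values of $t$ is contained in the finite set $\{\beta_j/\alpha_i : 1 \le i \le n,\ 1 \le j \le m\}$. This exhausts the possibilities and proves the lemma.

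There is no real obstacle; the only subtlety is being careful about the edge cases (the zero polynomial, and the nonvanishing of the $\alpha_i$), and tracking the difference between ``common factor in $k[x]$'' and ``common factor in $\bar{k}[x]$'' — but Gauss's lemma (or direct divisibility) shows a common nonconstant factor in $k[x]$ forces one in $\bar{k}[x]$, so ruling out the latter suffices.
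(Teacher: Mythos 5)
Your proposal is correct and follows essentially the same route as the paper's proof: factor both polynomials over an algebraic closure, observe that a common factor forces a root of $q$ to be mapped by scaling to a root of $r$, and use $q(0)\neq 0$ to conclude that only finitely many $t$ can cause this. The extra remarks about the zero polynomial and Gauss's lemma are sensible but not a different argument.
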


Speyer's proof is cleverly designed to avoid field extensions; we include a prosaic one for convenience.

\begin{proof}
In an algebraic closure, we may factor $r(x)=a(x-\alpha_1)(x-\alpha_2)\cdots(x-\alpha_n)$ and $q(x)=b(x-\beta_1)(x-\beta_2)\cdots(x-\beta_m)$. If $r(tx)$ and $q(x)$ have a common factor, then there is some $\alpha_i$ and $\beta_j$ such that $tx - \alpha_i$ is a multiple of $x-\beta_j$, or equivalently $\alpha_i = t \beta_j$. As $\beta_j$ is not zero, this eliminates only one value of $t$.
\end{proof}

\begin{lm}\label{separable_extension_of_simple_extension_is_simple}
Let $k \subseteq M$ be a finite simple field extension. Let $M \subseteq E$ be a finite separable field extension. Then $k \subseteq E$ is a simple extension.
\end{lm}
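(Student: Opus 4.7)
The plan is to reduce to the classical Primitive Element Theorem in the case where one of the two generators is separable over the base field; this is exactly the case that Speyer's proof handles via Lemma~\ref{SpeyerLemma2}. The finite-field case is immediate: if $k$ is finite then so is $E$, and any generator of the cyclic group $E^{*}$ is a primitive element. So I will assume $k$ is infinite.

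Write $M = k(\alpha)$ using that $M/k$ is simple. Since $M \subseteq E$ is finite separable, the classical Primitive Element Theorem produces $\beta \in E$ with $E = M(\beta)$. This $\beta$ is separable over $M$ but need not be separable over $k$, and the key step is to repair this by passing to a $p$-th power. Let $p = \operatorname{char}(k)$; I may assume $p > 0$, as in characteristic zero $\beta$ is automatically separable over $k$. Define $e$ by $[M:k]_{i} = p^{e}$. Because $E/M$ is separable, $[E:M]_{i} = 1$, so multiplicativity of inseparable degrees in the tower $k \subseteq M \subseteq E$ gives $[E:k]_{i} = [M:k]_{i} = p^{e}$. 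It follows that $[k(\beta):k]_{i}$ divides $p^{e}$, and hence $\beta' := \beta^{p^{e}}$ is separable over $k$. Moreover $M(\beta)/M(\beta')$ is simultaneously purely inseparable (since $\beta$ is a $p^{e}$-th root of $\beta'$) and separable (as a sub-extension of the separable extension $M(\beta)/M$), hence trivial, so $E = M(\beta') = k(\alpha, \beta')$.

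At this point $k \subseteq k(\alpha, \beta')$ is of the form treated by Speyer's argument: $\beta'$ is separable over $k$, while $\alpha$ may be inseparable. I would complete the proof by adapting Speyer's argument. After translating so that the minimal polynomials $q$ of $\alpha$ and $r$ of $\beta'$ over $k$ satisfy $q(0), r(0) \neq 0$, one considers $\gamma_{t} = \alpha + t\beta' \in E$ and the two polynomials $r(x)$ and $q(\gamma_{t} - tx)$ in $k(\gamma_{t})[x]$, each of which has $\beta'$ as a root. Lemma~\ref{SpeyerLemma2} is used to rule out additional common factors for all but finitely many $t \in k$, so that their greatest common divisor in $k(\gamma_{t})[x]$ is $(x - \beta')$. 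This places $\beta'$, and hence $\alpha = \gamma_{t} - t\beta'$, in $k(\gamma_{t})$, giving $E = k(\gamma_{t})$ for a suitable $t$ (which exists since $k$ is infinite). The main obstacle is the reduction in the previous paragraph, where the hypothesis that $E/M$ is separable enters essentially; once that reduction is made, Speyer's argument applies as in the classical setting.
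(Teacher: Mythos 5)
Your approach matches the paper's: replace $\beta$ by a $p$-power $\beta'$ that is separable over $k$ while preserving $E = M(\beta')$, and then run Speyer's primitive-element argument with $\alpha$ and the separable element $\beta'$. The one genuine difference is how you justify the reduction. The paper invokes a Lang exercise to get $E = M[\beta^{p^d}]$ for all $d$ and separately observes that some $\beta^{p^d}$ is separable over $k$; you instead set $p^e = [M:k]_i$, note that multiplicativity of inseparable degrees in the tower $k \subseteq k(\beta) \subseteq E$ forces $\beta^{p^e}$ to be separable over $k$, and observe that $M(\beta)/M(\beta^{p^e})$ is simultaneously separable (as a subextension of $M(\beta)/M$) and purely inseparable (since $\beta^{p^e} \in M(\beta^{p^e})$), hence trivial. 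That is a clean, self-contained alternative to the citation. One small inaccuracy in your final sketch: the hypothesis $q(0) \neq 0$ in Lemma~\ref{SpeyerLemma2} must be verified for the polynomial with the root factored out --- i.e.\ for $q$ defined by $g(x) = (x-\beta')\,q(x-\beta')$, $g$ the minimal polynomial of $\beta'$ --- and it holds because $\beta'$ is separable over $k$, not because the minimal polynomials of $\alpha$ and $\beta'$ have been translated to have nonzero constant term. Since you correctly identify separability of $\beta'$ over $k$ as the essential input to Speyer's lemma, this is harmless and easily repaired.
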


\begin{proof}
We may assume that $k$ is an infinite field of characteristic $p >0$, because the result is immediate when $k$ is finite or characteristic $0$. 

By assumption, there is an element $\alpha$ of $M$ such that $M = k[\alpha]$. Let $f$ be the minimal polynomial of $\alpha$ over $k$. 

Since $M \subseteq E$ is finite and separable, there exists $\beta$ in $E$ such that $E = M[\beta]= k[\alpha, \beta]$. Since $\beta$ is separable over $M$, $E = M[\beta^{p^d}]$ for all $d$ by \cite[Chapter~V, Exercise~16, page~254]{LangAlgebra}. There exists a $d$ such that $\beta^{p^d}$ is separable over $k$\hidden{Let $g$ be the minimal polynomial of $\beta$ over $k$. Then take $d$ maximal such that $g (x) = g_1(x^{p^d})$. $g_1$ is irreducible with nonzero derivative and therefore separable.}. Thus, by replacing $\beta$ by $\beta^{p^d}$, we may assume that $\beta$ is separable over $k$. Let $g$ be the minimal polynomial of $\beta$ over $k$.  

Define $r(x)$ and $q(x)$ in $E[x]$ by $f(x) = (x-\alpha)r(x-\alpha)$ and $g(x) = (x-\beta)q(x-\beta)$. Since $\beta$ is separable, we know that $0$ is not a root of $q(x)$. Therefore by Lemma \ref{SpeyerLemma2} and our assumption that $k$ is infinite, we can choose $t$ in $k$ such that $r(tx)$ and $q(x)$ have no common factor. We claim that $E =k[\alpha - t \beta]$.

To see this, let $h(x) = f(tx + \alpha - t \beta)$. Note that $h(x)$ has coefficients in $k[\alpha - t \beta]$. Since the polynomial ring $k[\alpha - t \beta][x]$ is a principal ideal domain, the ideal $\langle h(x), g(x) \rangle$ is generated by a polynomial $s(x)$ in $k[\alpha - t \beta][x]$, which we may assume to be monic. (So $s(x)$ is the GCD, but we wish to emphasize the ambient ring.)The polynomial $s(x)$ also generates the ideal $\langle h(x), g(x) \rangle$  of $E[x]$ generated $h(x)$ and $g(x)$. 

We compute $s(x)$ by computing the GCD of $h(x)$ and $g(x)$ in $E[x]$. We have $h(x) = f(tx + \alpha - t \beta) = t(x - \beta) r(t(x - \beta))$ and $g(x) =(x-\beta)q(x-\beta).$ By the choice to $t$, the polynomials $ r(t(x - \beta))$ and $q(x-\beta)$ have no common factor. Therefore, the GCD of $h(x)$ and $g(x)$ is $x-\beta$. Greatest common divisors are well-defined up to a nonzero scalar. Since $x-\beta$ and $s(x)$ are both GCDs of $h(x)$ and $g(x)$ and are both monic polynomials, we have that $s(x) = x-\beta$. Thus $\beta$ is in $k[\alpha - t \beta]$. Since $t$ is in $k$, it follows that $\alpha$ is in $k[\alpha - t \beta]$. Thus $k[\alpha - t \beta]\supset k[\alpha,\beta] = E$, proving the claim. 
\end{proof}

We combine the previous lemma to reprove the existence of the desired Nisnevich coordinates:

\begin{pr}\label{NisnevichCoord}
Let $Y$ be a smooth curve over $k$ and let $y$ be a closed point of $Y$. Then there exist Nisnevich coordinates around $y$.
\end{pr}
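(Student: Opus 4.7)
The plan is to reduce the proposition to the purely algebraic claim that $k(y)/k$ is always a simple field extension, and then invoke the extension of \cite[Lemma~18]{CubicSurface} described in the paragraph preceding the statement: that proof produces Nisnevich coordinates around $y$ whenever $k \subseteq k(y)$ is simple. This turns a geometric assertion into one about finite field extensions.

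To prove simplicity, I would apply Lemma~\ref{separable_extension_of_simple_extension_is_simple}, so it suffices to produce an intermediate field $k \subseteq M \subseteq k(y)$ with $M/k$ simple and $k(y)/M$ separable. Smoothness supplies such an $M$: since $Y$ is a smooth curve, $\Omega_{Y/k, y}$ is a free $\mathcal{O}_{Y,y}$-module of rank one, so I can pick $f \in \mathcal{O}_{Y,y}$ with $df$ a generator. The morphism $\phi \colon \Spec \mathcal{O}_{Y,y} \to \mathbb{A}^1_k$ defined by $t \mapsto f$ is then \'etale at $y$: it is unramified because the pullback $\phi^{*}\Omega_{\mathbb{A}^1/k} \to \Omega_{Y/k}$ sends $dt$ to the generator $df$ and hence has vanishing cokernel, and it is flat because a nonconstant morphism between smooth curves is automatically flat. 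Taking $M = k(\bar f)$, this $M$ is the residue field of the image $\phi(y)$, so $M/k$ is visibly simple, and $k(y)/M$ is separable because \'etale morphisms induce separable residue extensions. Lemma~\ref{separable_extension_of_simple_extension_is_simple} then yields that $k(y)/k$ is simple.

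The main conceptual obstacle is identifying the correct intermediate field $M$. The naive attempt, in which one takes $M$ to be the maximal separable subextension of $k(y)/k$, fails: the complementary extension $k(y)/M$ is then purely inseparable rather than separable, and in positive characteristic $M/k$ itself need not be simple. What makes the argument go through is the recognition that smoothness of $Y$, mediated by the cotangent sheaf, automatically produces an \'etale local coordinate $f$ whose residue $\bar f$ picks out a subfield $M$ with exactly the right properties; both halves of the hypothesis of Lemma~\ref{separable_extension_of_simple_extension_is_simple} are then satisfied by construction.
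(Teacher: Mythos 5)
Your proof is correct and follows essentially the same route as the paper's: reduce to showing $k(y)/k$ is simple, produce an \'etale map to $\mathbb{A}^1_k$ near $y$ using smoothness, decompose $k(y)/k$ through the intermediate residue field at the image (simple below, separable above), and finish with Lemma~\ref{separable_extension_of_simple_extension_is_simple}. The only cosmetic difference is that you build the \'etale map by hand from a generator of $\Omega_{Y/k,y}$, whereas the paper simply invokes its existence as a consequence of smoothness.
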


\begin{rmk}
Proposition~\ref{NisnevichCoord} follows from \cite[Chapter 8, Proposition 3.2.1]{knus} when $k$ is infinite.
\end{rmk}

\begin{proof}
By the proof of \cite[Lemma 18]{CubicSurface}, Nisnevich coordinates exist around $y$ when $k \subseteq k(y)$ is a finite, simple extension of fields. Since $Y$ is smooth, there is an \'etale map $\phi$ from a Zariski open neighborhood of $y$ to $\mathbb{A}^1_k$. It follows that $k \subseteq k(y)$ is of the form $k \subseteq k(\phi(y)) \subseteq k(y)$ where $k \subseteq k(\phi(y))$ is a simple extension and $ k(\phi(y)) \subseteq k(y)$  is a separable extension. The proposition thus follows from Lemma~\ref{separable_extension_of_simple_extension_is_simple}.
\end{proof}

To prove Theorem~\ref{IntroRHthm}, we prove that under the same hypotheses, there is an equality $$\sum_{\{y: df(y) = 0\}} \ind_y df = h(g(Y) -1 + \deg f(1-g(X)))$$ in $\GW(k)$ between the local indices (or degrees) of $df$ at its zeros and the hyperbolic form $h=\langle -1 \rangle + \langle 1 \rangle$ multiplied by the integer $g(Y) -1 + \deg f(1-g(X))$, where $g(X)$ and $g(Y)$ denote the genera of $X$ and $Y$, respectively, and $\deg f = [k(Y):k(X)]$.

\begin{proof}[Proof of Theorem~\ref{IntroRHthm}]
Since $f$ is separable, the section $df$ of $$\mathcal{V}= \Hom(f^* T^{\vee}X ,T^{\vee}Y)$$ is nonzero and therefore has only isolated zeros, because $Y$ is a curve. By Proposition~\ref{NisnevichCoord}, there are Nisnevich coordinates around all the zeros of $df$.

We claim that $\mathcal{V}$ is relatively orientable. By Assumption \eqref{assumeTXsquare}, we may choose a line bundle $M$ on $X$ such that $M^{\otimes 2}\cong T^{\vee}X$. Then 
\begin{align*}
\Hom(TY, \mathcal{V})&\cong T^{\vee}Y\otimes\Hom(f^{\vee}T^{\vee}X,T^{\vee}Y)\\
&\cong T^{\vee}Y\otimes(f^{*}T^{\vee}X)^{\vee}\otimes T^{\vee}Y\\
&\cong (T^{\vee}Y)^{\otimes 2}\otimes ((f^{*}M)^{\vee})^{\otimes 2} \\
&\cong (T^{\vee}Y\otimes(f^{\vee}M)^{\vee})^{\otimes 2},\\
\end{align*}
and so $\Hom(TY,\mathcal{V})$ is a square. We may therefore choose a relative orientation, and we do this now. (It will not matter what the chosen relative orientation is in the present case.)

Therefore, $e(Y, \mathcal{V}, df)$ is defined, and by definition is equal to \begin{equation}\label{eissumlocal}e(Y, \mathcal{V}, df) =  \sum_{y: df(y) = 0} \ind_y df.\end{equation}

Since $\mathcal{V}= \Hom(f^* T^{\vee}X ,T^{\vee}Y)$ is rank one, which is odd, $e(Y, \mathcal{V}, df)$ is a multiple of the hyperbolic element $h$ by \cite[Proposition 12]{FourLines} and Assumption \ref{technical}. (In a different context and under different hypotheses \cite[Proposition 12]{FourLines} is proven by M. Levine in \cite[Theorem 7.1]{Levine-EC}. M. Levine also credits J. Fasel.) Thus

\begin{equation}\label{eismulth} e(Y,\mathcal{V},df) =h(\deg\mathcal{V}/2). \end{equation}  Using the fact that $\Hom(f^* T^{\vee}X ,T^{\vee}Y)=(f^*T^{\vee}X)^{\vee}\otimes T^{\vee}Y$, we have

  \begin{align}\label{degV}\deg \Hom(f^* T^{\vee}X ,T^{\vee}Y) &= \deg(f^*T^{\vee}X)^{\vee}\otimes T^{\vee}Y\\&=\deg f^* TX  + \deg T^{\vee}Y \\ &=  \deg f \deg TX  + \deg T^{\vee}Y  
\\ &=  \deg f (2-2 g(X))  + (2 g(Y) -2).\end{align} Combining Equations \eqref{eissumlocal}, \eqref{eismulth}, and \eqref{degV} gives the desired result.
\end{proof}

\subsection{Acknowledgements}
We gratefully thank Shuji Saito for raising the question at the workshop {\em Motivic homotopy theory and refined enumerative geometry} in Essen, and for interesting discussions about it. We likewise thank Alexey Ananyevskiy and Ivan Panin for the reference to \cite[Chapter 8, Proposition 3.2.1]{knus} for the existence of Nisnevich coordinates.

Candace Bethea was partially supported by a SPARC Graduate Research Grant from the Office of the Vice President for Research at the University of South Carolina.

Jesse Kass was partially supported by the Simons Foundation under Award Number 429929.
 
Kirsten Wickelgren was partially supported by National Science Foundation Award DMS-1552730.

\bibliographystyle{amsalpha}

%
\bibliography{SaitoQuestion}

\end{document}